\documentclass[envcountsect]{svjour3}                     
\usepackage{indentfirst}
\usepackage{graphicx}
\usepackage{mathrsfs}
\usepackage{amsmath}
\usepackage{graphicx}
\usepackage{bm}
\usepackage{caption}
\usepackage{float}
\usepackage{amssymb}

\usepackage{amsthm}
\usepackage{mathptmx}     
\usepackage{enumerate}
\usepackage{subfigure}
\usepackage{algorithm}  
\usepackage{algorithmicx}  
\usepackage{algpseudocode} 
\usepackage{setspace} 
\usepackage{subfigure}
\usepackage{authblk}
\usepackage{eucal}
\usepackage{pgf}
\usepackage{etex}
\usepackage{longtable}
\usepackage{bbding}
\usepackage{pifont}
\usepackage{multicol}
\usepackage{xspace}
\usepackage{color}
\usepackage[numbers]{natbib}
\usepackage{algorithmicx}
\usepackage{algpseudocode}
\usepackage{algorithm}
\usepackage{todonotes}

\algdef{SE}[DOWHILE]{Do}{doWhile}{\algorithmicdo}[1]{\algorithmicwhile\ #1}%

\definecolor{dkgreen}{rgb}{0,0.6,0}
\definecolor{gray}{rgb}{0.5,0.5,0.5}
\definecolor{mauve}{rgb}{0.58,0,0.82}
\usepackage{hyperref}
\hypersetup{
	colorlinks=true,
	linkcolor=blue,
	filecolor=magenta,
	urlcolor=cyan,
	citecolor=blue
}
\usepackage{listings}
\usepackage{lineno,hyperref}
\usepackage[capitalize]{cleveref}

\newcommand{\MIP}{\text{MIP}\xspace}

\newcommand{\rev}[1]{{\color{black}#1}}

\newcommand{\I}{\mathcal{I}}

\newcommand{\C}{\mathcal{C}}
\newcommand{\CO}{\mathcal{O}}
\newcommand{\M}{\mathcal{M}}
\newcommand{\N}{\mathcal{N}}
\newcommand{\R}{\mathcal{R}}
\newcommand{\X}{\mathcal{X}}
\newcommand{\Impl}{\texttt{\#Bchg}\xspace}
\newcommand{\Tprobing}{\texttt{T$_{\text{probing}}$}}
\newcommand{\HiGHS}{{{HiGHS}}\xspace}

\newcommand{\nnz}{\texttt{nnz}\xspace}

\crefname{theorem}{Theorem}{Theorems}
\crefname{example}{Example}{Examples}
\crefname{observation}{Observation}{Observations}
\crefname{remark}{Remark}{Remarks}
\crefname{proposition}{Proposition}{Propositions}
\crefname{property}{Property}{Propertys}
\crefname{lemma}{Lemma}{Lemmas}
\crefname{corollary}{Corollary}{Corollaries}
\crefname{algocf}{Algorithm}{Algorithms}	
\crefname{table}{Table}{Tables}	
\crefname{figure}{Figure}{Figures}
\crefname{algorithm}{Algorithm}{Algorithms}
\crefname{section}{Section}{Sections}

\newcommand{\Default}{\texttt{Default}\xspace}
\newcommand{\DFProbing}{\texttt{DF+Probing}\xspace}
\newcommand{\All}{\texttt{All}\xspace}
\newcommand{\GDF}{\texttt{GDF}\xspace}

\hypersetup{
	colorlinks=true,
	linkcolor=blue,
	filecolor=magenta,
	urlcolor=cyan,
	citecolor=blue
}

\usepackage{titlesec}
\titlespacing*{\section}
{0pt}{3.0ex plus 1ex minus .1ex}{1.5ex plus .2ex}
\titlespacing*{\subsection}
{0pt}{2.0ex plus 1ex minus .1ex}{1.5ex plus .2ex}

\usepackage{orcidlink}
\definecolor{orcidlogocol}{HTML}{A6CE39}
\tikzset{
	orcidlogo/.pic={
		\fill[orcidlogocol] svg{M256,128c0,70.7-57.3,128-128,128C57.3,256,0,198.7,0,128C0,57.3,57.3,0,128,0C198.7,0,256,57.3,256,128z};
		\fill[white] svg{M86.3,186.2H70.9V79.1h15.4v48.4V186.2z}
		svg{M108.9,79.1h41.6c39.6,0,57,28.3,57,53.6c0,27.5-21.5,53.6-56.8,53.6h-41.8V79.1z M124.3,172.4h24.5c34.9,0,42.9-26.5,42.9-39.7c0-21.5-13.7-39.7-43.7-39.7h-23.7V172.4z}
		svg{M88.7,56.8c0,5.5-4.5,10.1-10.1,10.1c-5.6,0-10.1-4.6-10.1-10.1c0-5.6,4.5-10.1,10.1-10.1C84.2,46.7,88.7,51.3,88.7,56.8z};
	}
}

\newcommand\orcidicon[1]{\href{https://orcid.org/#1}{\mbox{\scalerel*{
				\begin{tikzpicture}[yscale=-1,transform shape]
					\pic{orcidlogo};
				\end{tikzpicture}
			}{|}}}}

\def\useLongFormat{0} 

\begin{document}
\setlength{\abovedisplayskip}{0.11cm}
\setlength{\belowdisplayskip}{0.11cm}

\title{Enhancing Presolve in Mixed Integer Programming by Combining Probing and Dual Fixing}

\author{Zhao-Wei Wang \and 
        Wei-Kun Chen \and
        Yu-Hong Dai}
\institute{Zhao-Wei Wang\orcidlink{0000-0002-7165-9895} and Yu-Hong Dai\orcidlink{0000-0002-6932-9512} \at Academy of Mathematics and Systems Science, Chinese Academy of Sciences, 100190, Beijing, China,
           School of Mathematical Sciences, University of Chinese Academy of Sciences, 100049, Beijing, China \\
           \email{\{wangzhaowei, dyh\}@lsec.cc.ac.cn} 
        \and
           Wei-Kun Chen\orcidlink{0000-0003-4147-1346} \at School of Mathematics and Statistics, Beijing Institute of Technology, 100081, Beijing, China \\
           \email{chenweikun@bit.edu.cn}
           }

\date{Received: date / Accepted: date}
\maketitle

\begin{abstract}
    Probing and dual fixing are two powerful presolve techniques in mixed integer programming (MIP) solvers. 
    Probing tentatively sets some  binary variables to $0$ or $1$,
    {applies} linear constraint based domain propagation techniques to derive better variable bounds, and {extracts} useful information such as stronger variable implications and better global variable bounds.
	Dual fixing attempts to fix variables to lower or upper bounds while ensuring {that} at least one optimal solution is retained, as long as the problem was feasible.
	In this paper, we investigate how to combine the two approaches in \MIP solvers to achieve a better performance.
    In particular, we first embed dual fixing into the probing framework, deriving more useful variables' implications for enhancing the capability of probing. 
    Then, we develop  an improved dual fixing technique where more variable fixings can be applied, and use  the probing framework to detect the reductions.
    Computational results on the MIPLIB 2017 benchmark instances demonstrate \rev{the potential of the two proposed techniques in combining probing and dual fixing on the open-source \MIP solver HiGHS.}
\end{abstract}
\keywords{Mixed integer programming $\cdot$ presolve $\cdot$ probing $\cdot$ dual fixing}


\section{Introduction}\label{section-introduction}

	Presolve is a fundamental technique for mixed integer programming (\MIP) problems.
	It  contains a set of algorithms  designed to eliminate redundant information and strengthen the formulation, with the goal of accelerating the subsequent branch-and-cut process.
	In the literature, many researchers have proposed various presolve techniques to improve the computational performance of solving \MIP{s}; see \cite{Andersen1995,Brearley1975a,Crowder1983,Gamrath2015,Gemander2020a,Gleixner2023,Guignard1981,Hoffman1991,Johnson1980,Savelsbergh1994} among many of them. 
	Achterberg et al. \citep{Achterberg2019} provided a comprehensive discussion of various presolve techniques.
	According to the investigations in  \cite{Achterberg2019,Achterberg2013b}, presolve can frequently turn a problem from intractable to easily solvable, and therefore, it 
	 has become one of the most important ingredients of modern MIP 
	solvers.


    \emph{Probing} is a well-known presolve technique for \MIP problems \citep{Savelsbergh1994}. 
    Its basic idea is to tentatively set some binary variables to $0$ or $1$, {and}
    apply linear constraint based domain propagation techniques to derive better variable bounds.
    Analyzing these results can provide useful information including variable fixings, substitutions,  implications (which are useful for the subsequent branch-and-cut process), and better global variable bounds.
    Probing is a very effective presolve technique for improving the performance of \MIP solvers and has been widely investigated in the literature; see, e.g.,  \cite{Achterberg2007,Achterberg2019,Dai2024,Gleixner2023}.
    Kılınç et al. \citep{Klnc2018} and Ma et al. \citep{Ma2024} extended probing to solving mixed integer nonlinear programming and stochastic programming problems.
    

    \emph{Dual fixing} is another well-known presolve technique in the literature.
    It is a \emph{\rev{dual} presolve technique}, which derives reductions by considering the information of the objective function; the reductions may remove optimal solutions, while ensuring that at least one optimal solution is retained, as long as the original problem was feasible \cite{Achterberg2007,Achterberg2013b,Heinz2013}.
    In particular, dual fixing sets a variable to its upper bound (respectively, lower bound) if (i)  increasing (respectively, decreasing) the variable's value makes all the constraints other than its bound constraints in the problem less tight, and
    (ii) the variable has a nonpositive (respectively, nonnegative) objective coefficient.
    \rev{If the variable {being fixed} has a nonzero objective coefficient, then the resulting reduction is a \emph{weak} dual presolve reduction, which may remove only suboptimal solutions while retaining all optimal solutions.
    Otherwise, the reduction is a \emph{strong} dual presolve  reduction, which may remove optimal solutions while  retaining at least one optimal solution \cite{Hoen2024}.}
    Dual fixing is simple to implement and has been included as a standard presolve ingredient  in all state-of-the-art \MIP solvers \cite{Achterberg2007,Achterberg2019,Achterberg2013b}.

    Although probing and dual fixing are well studied, the combination of the two approaches, however, has not been discussed in the literature. 
    In this paper, we attempt to fill this research gap by developing methods to combine and enhance these two powerful techniques for \MIP problems.
    In  particular, we first embed dual fixing into the probing framework to detect more reductions.
    The rationale behind this is that tentatively setting some binary variables to $0$ or $1$ and then applying domain propagation techniques could substantially change the structure of the problem. 
    This can further trigger dual fixing to detect more variable fixings, which in turn  enables probing to detect more reductions.
    
    In addition, we develop a probing-based approach to detect more dual fixings.
    Specifically, we first propose an improved dual fixing technique, which, 
    instead of relying directly on the constraint matrix of the MIP problem, relies on the point representation of the feasible region (i.e., the collection of points at which the constraints of the \MIP problem hold), and can detect more variable fixings.
    Then, we use the probing framework to quickly detect the condition for which the improved dual fixing can be applied.
    
    We implement the two proposed techniques (in combining probing and dual fixing) in the state-of-the-art {open-source} \MIP solver \HiGHS \citep{Huangfu2018}.
    Computational results on the MIPLIB 2017 benchmark instances \citep{Gleixner2021} demonstrate \rev{the potential of the two proposed techniques on HiGHS.}


    
%
%

    This article is organized as follows. 
    \cref{section-dfprobing} introduces the integration of dual fixing in the probing framework. 
    \cref{section-generalize} proposes improved conditions for applying dual fixing, and uses the probing framework to verify whether the conditions hold.
    \cref{section-num} presents the computational results.
    {\cref{section-conclusion} concludes the paper.} \\[5pt]
    \noindent {\bf {Notation}}. We use the following {notation} throughout the paper. 
    Given a matrix $A\in\mathbb{R}^{m\times n}$, vectors $c\in\mathbb{R}^n$, 
    $\ell\in(\mathbb{R}\cup\{-\infty\})^n$, $u\in(\mathbb{R}\cup\{+\infty\})^n$, $b \in \mathbb{R}^m$, 
    variables $x\in$ $\mathbb{R}^n$ with $x_j\in\mathbb{Z}$ for $j\in \I\subseteq \N := \left\{1,\cdots,n\right\}$, 
    the \MIP problem can be written as
    \begin{equation}
    	\min \left\{c^\top x ~\mid~ Ax \le b,~ \ell \le x \le u,~ {x_j \in \mathbb{Z},~ \forall~j \in \I}\right\}.
    	\label{MIP}
    	\tag{MIP}
    \end{equation}
    We denote the feasible region of \eqref{MIP} as 
    \begin{equation}\label{Xdef}
    	\X= \{ x\in \mathbb{R}^n ~ \mid ~ Ax \le b,~ \ell \le x \le u,~ {x_j \in \mathbb{Z},~ \forall~j \in \I} \}.
    \end{equation}
   For matrix $A$, the $j$-th column is denoted as $A_{\cdot j}$,
   the  $i$-th row is denoted as $A_{i\cdot}$, and
   the $(i,j)$-th entry is denoted {as} $a_{ij}$.
   {Finally, we use $\M = \left\{1,\cdots,m\right\}$ to denote the index set of constraints.}

\section{Integration of dual fixing in probing}
\label{section-dfprobing}

In this section, we first review the probing and dual fixing techniques and then enhance probing by embedding dual fixing to derive more reductions.


\subsection{Probing}\label{probing-section}
	
	We first briefly discuss the probing technique; see \cite{Achterberg2007,Savelsbergh1994} for a detailed discussion.
	The basic idea of probing is to tentatively set some  binary variables to $0$ or $1$,
	apply (linear constraint based) domain propagation techniques to derive better variable bounds, and extract useful information such as stronger variable implications and better global variable bounds.
    Let $x_k$, $k \in \N$, be a binary variable.
    For $j \in \N\backslash \{k\}$, let $\ell_j^0$ and $u_j^0$  be the lower and upper bounds of $x_j$ deduced from $x_k = 0$ (after applying the domain propagation), and $\ell_j^1$ and $u_j^1$  be the lower and upper bounds of $x_j$ deduced from $x_k = 1$.
    Then, we can derive the following information from probing {on} $x_k$:
    \begin{enumerate}
        \item[R1.] If setting $x_k = 0$ (respectively, $x_k =1$) leads to an infeasible problem, then we can fix $x_k = 1$ (respectively, $x_k =0$). 
        \item[R2.] If $\ell_j^0=u_j^0$ and $\ell_j^1 = u_j^1$, then $x_j$ can be substituted by $x_k$: $x_j:= \ell_j^0 + (\ell_j^1-\ell_j^0)x_k$.
        \item[R3.] If no variable fixing or substitution is detected, we can derive variable implications between $x_j$ and $x_k$:
            \begin{equation}
            	\label{implication}
                \begin{array}{l}
                    x_j \ge \left(\ell_j^1 - \ell_j^0\right) x_k + \ell_j^0, ~
                    x_j \le \left(u_j^1 - u_j^0\right) x_k + u_j^0. \\
                \end{array}
            \end{equation}
            These implications can be used to guide variable branching \citep{Achterberg2007}
            and 
            enhance the mixed integer rounding cut separation \citep{Marchand2001} in the subsequent branch-and-cut process.
        \item[R4.] Finally, we can set ${\ell}_j:= \min\{\ell_j^0, \ell_j^1\}$ and ${u}_j:=\max\{u_j^0, u_j^1\}$ as new global lower and upper bounds for variable $x_j$.
    \end{enumerate}
	Probing could be time-consuming, but effective termination criteria have been extensively investigated in the literature \citep{Achterberg2019}.
	Indeed, probing is a very effective presolve technique \citep{Achterberg2019, Achterberg2013b} for improving the performance of \MIP solvers and has been implemented in all state-of-the-art \MIP solvers.
    
\subsection{Dual Fixing}\label{section-dualfixing}

Dual fixing is a \rev{dual} presolve technique which attempts to fix variables to lower or upper bounds while ensuring {that} at least one optimal solution is retained, as long as the problem was feasible. 
Specifically, dual fixing considers the following two cases where variable fixing can be applied to variable $x_j$, $j \in \N$:
\begin{enumerate}[left=0pt]
    \item [(DF1)] $c_j \ge 0$, $a_{ij} \ge 0$, $\forall~ i \in \M$, and
    \item [(DF2)] $c_j \le 0$, $a_{ij} \le 0$, $\forall~ i \in \M$.
\end{enumerate}
In case (DF1), if $\ell_j > -\infty$, we can fix $x_j = \ell_j$;
if $\ell_j = -\infty$ and $c_j > 0$, then the problem is unbounded or infeasible; and 
if $\ell_j = -\infty$ and $c_j = 0$, we can remove all constraints $i \in \M$ 
with $a_{ij} \neq 0$ from the problem. 
The last case is valid since in a post-solve process, we can always recover a finite value of $x_j$ to satisfy all those constraints (as long as the problem was feasible). 
Similarly, for case (DF2), if $u_j< +\infty$, we can fix $x_j = u_j$; if $u_j = +\infty$ and $c_j < 0$, then the problem is unbounded or infeasible; and 
if $u_j = +\infty$ and $c_j = 0$, we can remove all constraints $i \in \M$ 
with $a_{ij} \neq 0$ from the problem. 
\rev{Note that if $c_j \neq 0$, then the reductions derived from case (DF1) or (DF2) may remove only suboptimal solutions while retaining all optimal solutions, and are therefore weak dual reductions;
otherwise, these reductions may remove optimal solutions while ensuring that at least one optimal solution is retained, and are therefore strong dual reductions \cite{Hoen2024}.}

Dual fixing can be easily implemented with the {concepts} of   \texttt{down-locks} and  \texttt{up-locks} \citep{Achterberg2007}. 
Formally, for each variable $x_j,~ j \in \N$, letting
\begin{equation*}
	\C_j^- = \left\{i \in \M ~\mid~ a_{ij} < 0 \right\}~\text{and}~\C_j^+ = \left\{i \in \M ~\mid~ a_{ij} > 0 \right\},
\end{equation*}
then the  \texttt{down-locks} and \texttt{up-locks} of variable $x_j$ are defined by $|\C_j^-|$ and $|\C_j^+|$. 
Intuitively, $|\C_j^-|$ and $|\C_j^+|$ are the {numbers} of constraints that ``block'' variable $x_j$ from decreasing and increasing, respectively.
With these definitions, if $|\C_j^-| = 0$ and $c_j \ge 0$ or $|\C_j^+| = 0$ and $c_j \le 0$, then dual fixing can be applied.
This also implies that dual fixing can be implemented with the complexity of $\CO(\nnz)$, where \nnz is the number of nonzeros in the constraint matrix $A$.


Achterberg et al. \citep{Achterberg2019} further considered the case $c_j \geq 0$ and $\C_j^-=\{r\}$ (i.e., only the $r$-th constraint blocks variable $x_j$ from decreasing), and proposed the dual substitution technique to derive variable substitutions.
In particular, for a binary variable $x_k$, if $x_k = 0$ implies the redundancy of constraint $r$ and thus {$x_j  = \ell_j$} (by dual fixing), and $x_k=1$ implies $x_j =u_j$ (e.g., by domain propagation techniques), then we can substitute $x_j:= \ell_j + (u_j - \ell_j)x_k$. 
A similar argument can be applied for the case $c_j \leq 0$ and $|\C_j^+|=1$.
In the next subsection, we will show that reductions detected by dual substitution, however, can be automatically detected if dual fixing is embedded into the probing framework.

%

\subsection{Embedding dual fixing into the probing framework}\label{section-embed}


The motivation of embedding dual fixing into the probing framework is that setting some binary variables to $0$ or $1$ and then applying (linear constraint based) domain propagation techniques could substantially change the structure of the problem; that is, some variable bounds become much tighter and therefore, some constraints may become redundant.
This can further trigger dual fixing to detect more variable fixings, thereby enabling probing to extract more useful information; see R1--R4 in \cref{probing-section}.
This is formalized below.

For a binary variable $x_k$, we probe {on} $x_k = v \in \left\{0, 1\right\}$, obtaining the subproblem:
\begin{equation}
    \min \left\{c^\top x ~|~ Ax \le b,~ \ell \le x \le u,~ {x_j \in \mathbb{Z},~ \forall~j \in \I},~ {x_k = v} \right\}.
    \label{Reduced-MIP}
    \tag{\MIP-$v$}
\end{equation}
Applying domain propagation techniques on problem \eqref{Reduced-MIP}, the variable bounds $\ell$ and $u$ can be tightened and some constraints will therefore become redundant. 
In particular, let
\begin{equation}
    \min \left\{c^\top x ~|~ A_{\R \cdot} x \le b_\R,~ \ell' \le x \le u',~ {x_j \in \mathbb{Z},~ \forall~j \in \I},~{x_k=v} \right\},
    \label{RemoveRedundant-MIP}
\end{equation}
be the (equivalent) reduced \MIP problem, where $\ell'$ and $u'$ are the variable bounds in the reduced problem, $\R$ is the set of non-redundant  constraints, 
$A_{\R \cdot}$ denotes the matrix consisting of the coefficients of $A$ restricted to the row indices in $\R$, and $b_\R$ denotes the vector  $b$ restricted to the indices in $\R$.
For $j\in \N$, we can update the sets corresponding to \texttt{down-locks} and \texttt{up-locks} as follows:
\begin{equation*}
    \overline{\C}_j^- = \left\{i \in \R ~\mid~ a_{ij} < 0 \right\}~\text{and}~\overline{\C}_j^+ = \left\{i \in \R ~\mid~ a_{ij} > 0 \right\}.
\end{equation*}
Then, {in the reduced \MIP problem \eqref{RemoveRedundant-MIP}}, we can set
$x_j = {\ell_j^\prime}$ if $\left|\overline{\C}_j^-\right| = 0$ and $c_j \ge 0$, or set $x_j = {u_j^\prime}$
if $\left|\overline{\C}_j^+\right| = 0$ and $c_j \le 0$. 
Note that $\left|\overline{\C}_j^-\right| \le \left|\C_j^-\right|$ and $\left|\overline{\C}_j^+\right| \le \left|\C_j^+\right|$, and hence applying dual fixing on {the reduced \MIP problem \eqref{RemoveRedundant-MIP}} may detect more variable fixings, obtaining tighter lower bounds $\ell^v$ and upper bounds $u^v$ ({valid for \eqref{RemoveRedundant-MIP}}). 
As a result, after probing {on} $x_k =0$ and $x_k =1$, we can extract more useful information from R1--R4 in \cref{probing-section}, {which is globally valid for problem \eqref{MIP}}. 
The overall procedure of embedding dual fixing into the probing framework is summarized in \cref{alg1}.
\begin{algorithm}[t]
    \caption{Dual fixing augmented probing}
    \small
    \begin{algorithmic}[1]
        \Require A binary variable $x_k$ to be probed {on}, $A,~b,~c,~\ell,~u,~\I$.
        \For{$v = 0, 1$}
        \State Set $x_k = v$ in problem \eqref{MIP}, obtaining subproblem \eqref{Reduced-MIP}.
        \Repeat
        	\State Call domain propagation presolver on problem \eqref{Reduced-MIP} to obtain variable bounds $\ell' \leq x \leq u'$.
        	\State Call redundant row presolver to compute the set $\R$ of non-redundant constraints.
        	\State Call dual fixing presolver on problem \eqref{RemoveRedundant-MIP} to obtain tighter variable bounds $\ell^v \leq x \leq  u^v$.
        \Until{no reduction is found.}
        \EndFor
        \State Extract reductions using R1--R4.
    \end{algorithmic}
    \label{alg1}
\end{algorithm}
In \cref{alg1}, the loop in steps 3--7 iteratively applies linear constraint based domain propagation and dual fixing techniques until no more reduction is found (based on our computational experience, it usually terminates within a few iterations). In addition, in the loop,
step 5 computes the set $\R$ of non-redundant constraints and step 6 applies the dual fixing presolver on the reduced {\MIP} problem \eqref{RemoveRedundant-MIP}. 
Both steps can be implemented with the complexity of $\CO(\nnz)$.

We use the following example to illustrate the effectiveness of the proposed dual fixing augmented probing framework.


\begin{example}\label{eexMIP}
Consider the following example
\ifnum\useLongFormat=1
\begin{equation}\label{exMIP}
	\begin{array}{ll}
		\min & x_1 + x_2 \\
		~\text{s.t.} & - x_1 + x_3 \le 0 \\
		& - x_2 + x_3 + x_4 \le 1 \\
		& x_1, x_2, x_3, x_4 \in \left\{0, 1\right\}.
	\end{array}
\end{equation}
\else
\begin{equation}\label{exMIP}
	\min \{   x_1 + x_2 ~\mid~ - x_1 + x_3 \le 0,~ - x_2 + x_3 + x_4 \le 1,~x_1, x_2, x_3, x_4 \in \left\{0, 1\right\}\}.
\end{equation}
\fi
By simple computation, it follows that $\C_1^- = \left\{1\right\}$ and $\C_2^- = \left\{2\right\}$, and thus the \texttt{down-locks} of both variables $x_1$ and $x_2$ are equal to $1$, prohibiting us from fixing the two variables to $0$.
If we probe {on} $x_3 = 0$, the first constraint $- x_1 + x_3 \le 0$ becomes redundant. We can update $\overline{\C}_1^- = \varnothing$ and apply dual fixing to fix $x_1 = 0$.
If we probe {on} $x_3 = 1$, then by the domain propagation, constraints $-x_1 + x_3 \leq 0$ and $x_1 \in \{0,1\}$ will force $x_1 = 1$. 
As a result, we can substitute $x_1 = x_3$ in problem \eqref{exMIP}. 
This is the dual substitution technique provided in \cite{Achterberg2019}; see also the last paragraph of \cref{section-dualfixing}.
Moreover, if we probe {on} $x_4 = 0$, the second constraint $-x_2 + x_3 + x_4 \le 1$ is redundant, and thus we can apply dual fixing to fix $x_2 = 0$. 
However, probing {on} $x_4 = 1$ cannot force $x_2 =1$ (by domain propagation techniques), and hence the dual substitution cannot substitute $x_2$ out
from the problem. 
Nevertheless, using the proposed dual fixing augmented probing framework in \cref{alg1}, we can detect the variable implication $x_2 \le x_4$, 
which can be used in R3 of \cref{probing-section} to detect further reductions.
\end{example}

{Note that when probing on multiple binary variables,  special care must be taken on the implications \eqref{implication} extracted from the tightened variable bounds (i.e., $\ell^0$, $u^0$, $\ell^1$, and $u^1$) and for which the reasoning comes from dual fixing of variable $x_j$ with $c_j=0$ in step 6 of \cref{alg1}. 
Indeed, if $c_j >0$ or $c_j <0$, then the implications of variable $x_j$ in \eqref{implication} obtained by dual fixing will not remove any optimal solution of problem \eqref{MIP}.
However, if $c_j =0$, then the implications of variable $x_j$ in \eqref{implication} obtained by dual fixing may exclude some optimal solution(s) of  problem \eqref{MIP}.
{Such implications, obtained by probing on variable $x_k$, could be inconsistent with the subsequent reductions derived by probing on other variables, as simultaneously adding them into problem \eqref{MIP} may exclude all optimal solutions. 
The following example illustrates this inconsistency.}}
\begin{example}\label{inconsistency}
	{Consider the following example
	\begin{equation}\label{MIPex}
		\min\{ -x_1 - x_2  ~\mid~  x_1 + x_2 + x_3 \ge 1,~  x_3 - x_2 \le 0,~x_1, x_2, x_3 \in \left\{0,1\right\} \}.
	\end{equation}
	By simple calculation, the optimal solutions of problem \eqref{MIPex} are $\left(1, 1, 0\right)$ and $\left(1, 1, 1\right)$.
	Probing on $x_1 = 1$, {constraint $x_1 + x_2 + x_3 \geq 1$ becomes redundant and we can fix $x_3 = 0$ by dual fixing;
  	probing on $x_1 = 0$ does not yield additional reductions.
    Using R3 in \cref{probing-section}, we obtain the implication $ x_3 \leq 1-x_1$.
	On the other hand, probing on $x_2 = 1$, constraint  $x_3 - x_2 \leq 0$ becomes redundant, and we can fix $x_3 = 1$ by dual fixing;
    probing on $x_2 = 0$, we can fix $x_3 = 0$ by domain propagation.
    Using R2 in \cref{probing-section}, we obtain the variable substitution $x_3 = x_2$.}
	Note that the implication $ x_3 \leq 1-x_1$ (obtained by probing on $x_1$) excludes the optimal solution $(1,1,1)$ but is valid for problem \eqref{MIPex}, 
 	as the optimal solution $(1,1,0)$ is still retained after adding it.
	However, the implication $ x_3 \leq 1-x_1$ is inconsistent with the variable substitution $x_3 = x_2$ (obtained by probing on $x_2$) as simultaneously adding it and  $x_3 = x_2$ into problem \eqref{MIPex} will eliminate the two optimal solutions $\left(1, 1, 0\right)$ and $\left(1, 1, 1\right)$.}
\end{example}%


The above example illustrates that implications \eqref{implication}, for which the reasoning comes from dual fixing of variable $x_j$ with $c_j=0$,  through probing on a variable cannot be directly used, since they may, together with the subsequent reductions derived by probing on other variables, exclude all optimal solutions of problem \eqref{MIP}.
To avoid this interference caused by probing on different variables,
one approach is to ignore variables $x_j$ with $c_j = 0$ when applying dual fixing in step 6 of \cref{alg1}.
With this approach, all optimal solutions will be retained, and the resultant method becomes a weak dual presolve technique; see the third paragraph in \cref{section-introduction}.
However, based on our computational experience, this approach is too conservative and may underestimate the potential of  the dual fixing augmented probing.

To enable the application of more reductions, we propose to disregard only the implications (2) extracted from the tightened variable bounds (i.e., $\ell^0$, $u^0$, $\ell^1$, and $u^1$) and for which the reasoning comes from dual fixing of variable $x_j$ with $c_j=0$ in step 6 of \cref{alg1}. 
Specifically, in \cref{alg1}, we (i) temporarily store the bound changes of $\ell^v$ and $\ell$ (respectively, $u^v$ and $u$ where $v \in \{0,1\}$) for which the reasoning comes from dual fixing of variable $x_j$ with $c_j=0$, (ii) use them, together with other bound changes, to perform the global reductions R1, R2, and R4 in step 9, 
and (iii) disregard them right after probing on variable $x_k$.

%

{Note that in the implementation of the probing in \MIP solvers (e.g.,  \HiGHS), the variable fixing and bound tightening in R1 and R4 are performed right after probing on the current variable $x_k$,
but the variable substitutions in R2 are delayed until all candidate variables in the current round are probed on, as to avoid the computational overhead spent in the initialization of probing \citep{Gleixner2023}.
Therefore, inconsistency may still exist, as illustrated by the following example.
\begin{example}\label{inconsistency-delay}
	{Consider a variant of problem \eqref{MIPex}: 
	\begin{equation}\label{MIPex-delay}
		\min\{ -x_1 - x_2  ~\mid ~ x_1 + x_2 + x_3 \ge 1,~  x_3 - x_2 \le 0,~ {x_3 \geq 1 - x_1},~ x_1, x_2, x_3 \in \left\{0,1\right\} \},
	\end{equation}
    where an additional constraint $x_3 \ge 1 - x_1$ is enforced.
    The optimal solutions of \eqref{MIPex-delay} are still $\left(1, 1, 0\right)$ and $\left(1, 1, 1\right)$.
    Probing on $x_1$ and using R2 in \cref{probing-section},  we obtain the variable substitution $x_3 = 1 - x_1$.
    Probing on $x_2$ and using R2,  we obtain the variable substitution $x_3 = x_2$.
    However, the two variable substitutions $x_3 = 1 - x_1$ and $x_3 = x_2$ are inconsistent, as simultaneously adding both into problem \eqref{MIPex-delay} will eliminate the two optimal solutions $\left(1, 1, 0\right)$ and $\left(1, 1, 1\right)$.
    Note that if the variable substitution $x_3=1-x_1$ is applied right after probing on $x_1$, we obtain the reduced problem $\min\{ x_3 - x_2 - 1  ~\mid~  x_3 - x_2 \le 0,~ x_2, x_3 \in \left\{0,1\right\} \}$ for which probing on variable $x_2$ will not yield the variable substitution $x_3 = x_2$. 
    }
\end{example}%
To further avoid inconsistency incurred by the delayed variable substitution for which the reasoning comes from dual fixing of variable $x_j$ with $c_j=0$,
 we can assign a fixed direction for {variable $x_j$ with $c_j =0$ when} performing dual fixing; that is, either (DF1) or (DF2) (but not both) in \cref{section-dualfixing} can be performed in the whole probing process.
Note that this is equivalent to a perturbation on the objective coefficient of variable $x_j$ with $c_j = 0$ and therefore, the delayed variable substitutions are consistent.
In our implementation, the fixed direction for variable $x_j$ with $c_j=0$ is set to be the direction where dual fixing for variable $x_j$ was first performed.}

\begin{remark}
    {Note that after assigning fixed directions for variables with zero objective coefficients, the implications derived by dual fixing are now consistent (at the time probing was performed).
	However, these implications may still be inconsistent with other dual reduction components in the \MIP solver (such as global dual fixing).
	For this reason, we do not use this kind of implication out of the probing. }
\end{remark}

\section{Using probing to detect more (generalized) dual fixings}\label{section-generalize}

The conditions (DF1) and (DF2) in \cref{section-dualfixing} for applying dual fixing are straightforward and can be efficiently verified, but they also highly depend on the use of $A$ to characterize the feasible region $\X$ of \eqref{MIP}.
Indeed, to characterize the feasible region $\X$ of \eqref{MIP}, different formulations can be used, each of which associates with a matrix $A$; see \cite[Section 1.6]{wolsey2020}.
It is possible that in some formulations, (DF1) or (DF2) is satisfied, allowing dual fixing to be applied, while in others, neither (DF1) nor (DF2) is satisfied.
In order to remedy this, in this section, we first introduce  improved conditions for applying dual fixing, which directly take the point representation of the feasible region $\X$ into account rather than a specific constraint matrix representation, and thus enable the detection of more reductions than the classic conditions (DF1) and (DF2).
Then, we transform the problem of checking whether the conditions are satisfied into optimization problems.
Finally, we use the probing framework to solve the optimization problems, thereby enabling a quick detection of the reductions.


\subsection{Improved conditions for applying dual fixing}

To develop the improved conditions for applying dual fixing, we need the {concepts} of \emph{lower bound reachable} and \emph{upper bound reachable}, which rely on the point representation of the feasible region $\X$ of \eqref{MIP}. 
\begin{definition}\label{reachableDefinition}
	(i) A variable $x_j$ is lower bound reachable with respect to $\X$ if setting $x_j=\ell_j$ in any feasible solution $x$ of $\X$ yields another feasible solution, {that is, if} {$x=(x_1, \ldots, x_j, \ldots, x_n)$ $\in \X$}, {then} {$\bar{x}=(x_1, \ldots, \ell_j, \ldots, x_n) \in \X$ as well} (when $\ell_j= -\infty$, this means that $\bar{x}=(x_1,$ $\ldots, a, \ldots, $ $x_n) \in \X$ holds for any $a \leq x_j$).
	(ii) A variable $x_j$ is upper bound reachable with respect to $\X$ if setting $x_j=u_j$ in any feasible solution $x$ of $\X$ yields another feasible solution, i.e., if $x=(x_1, \ldots, x_j, \ldots, x_n) \in \X$, then $\bar{x}=(x_1, \ldots, u_j, \ldots, x_n) \in \X$ as well (when $u_j= +\infty$, this means that $\bar{x}=(x_1, \ldots, a, \ldots, x_n) \in \X$ holds for any $a \geq x_j$).
\end{definition}
Note that for $\X$ defined in \eqref{Xdef}, if $a_{ij}\geq 0$ holds for all $i \in \M$, then $x_j$ must be lower bound reachable; 
if $a_{ij}\leq 0$ holds for all $i \in \M$, then $x_j$ must be upper bound reachable.
Moreover, it is simple to see that if 
\begin{enumerate}[left=0pt]
	\item [(IDF1)] $c_j \ge 0$ and $x_j$ is lower bound reachable, or
	\item [(IDF2)] $c_j \le 0$ and $x_j$ is upper bound reachable,
\end{enumerate}
then dual fixing can be applied to fix variables to lower or upper bounds. 
The following example shows that the conditions in (IDF1)--(IDF2) enable the detection of more reductions than those in (DF1)--(DF2).

%

\begin{example}\label{ConditionExample}
    Consider the following example
    {
    \begin{equation}
        \label{condition-mip}
        \min\{-x_1 -2x_2 +x_3~ \mid~ 0 \le 8x_1 + 4x_2 - x_3 \le 8,~x_1, x_2, x_3 \in \left\{0, 1\right\} \}.
    \end{equation}}%
	As {$|\C^-_3| = 1$}, dual fixing cannot be applied to fix {$x_3 = 0$}. 
	However, as
	{$\X = \{ (x_1, x_2, x_3) \in \{0,1 \}^3 ~ \mid ~ 0 \le 8x_1 + 4x_2 - x_3 \le 8 \}= \left\{(1, 0, 1), (1, 0, 0), (0, 1, 1), 
	(0, 1, 0),(0,0,0)\right\}$},
	$x_3$ is {lower} bound reachable, and by {$c_3 = 1 \geq 0$}, {condition (IDF1)} is satisfied, enabling the application of dual fixing to fix $x_3 = 0$. 
\end{example}

\subsection{Determining whether a variable is lower or upper bound reachable}

{Using \cref{reachableDefinition}, we can provide a natural approach to determine whether a variable $x_j$ is lower or upper bound reachable with respect to $\X$---one can enumerate all feasible points of $\X$ and check whether the condition for a variable to be lower or upper bound reachable is satisfied.
This approach, however, is computationally intractable, as the number of feasible points in $\X$ could be infinite}. 
In order to develop a {computationally} tractable approach, we need the following result.

\begin{theorem}
	\label{fixing-theorem}
	{Let $o_j^{-,r}$ and $o_j^{+,r}$ be defined as
	\begin{align}
		& o_j^{-,r} := \max_{x_j > \ell_j} \left\{\sum\limits_{s \in \N \backslash \{j\}} a_{rs} x_s ~\mid~ x \in \X \right\},~\forall~j \in \N,~r \in \M ~\text{with}~a_{rj} <0,\label{opt1}\\
		& o_j^{+,r} := \max_{x_j < u_j}    \left\{\sum\limits_{s \in \N \backslash \{j\}} a_{rs} x_s ~\mid~ x \in \X \right\},~\forall~j \in \N,~r \in \M ~\text{with}~a_{rj} >0.\label{opt2}
	\end{align}}
    Then for $j \in \N$, (i) $x_j$ is lower bound reachable if and only if $o_j^{-,r} + a_{rj} \ell_j  \le b_r$  holds for all $r \in \M$ with $a_{rj} < 0$; 
    and (ii)  $x_j$ is upper bound reachable if and only if $	o_j^{+,r}+ a_{rj} u_j  \le b_r$ holds for all $r\in \M$ with $a_{rj} > 0$.
\end{theorem}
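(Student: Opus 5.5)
We prove (i); (ii) follows by the symmetric argument, replacing $\ell_j$ by $u_j$ and reversing the sign conditions on $a_{rj}$. Throughout we assume $\ell_j > -\infty$. The case $\ell_j=-\infty$ is read with the conventions of \cref{reachableDefinition}: the condition $a_{rj}<0$ makes $a_{rj}\ell_j=+\infty$, so both sides degenerate consistently, and we will only remark on this. Fix $j$. For $x\in\X$, let $\bar x$ denote $x$ with its $j$-th coordinate replaced by $\ell_j$. Since $\ell_j$ is integral whenever $j\in\I$ (bounds of integer variables may be assumed integral, as is standard), $\bar x$ automatically satisfies the bound and integrality constraints. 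Hence $\bar x\in\X$ if and only if $A_{r\cdot}\bar x\le b_r$ holds for every $r\in\M$.

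\emph{Reduction to rows with $a_{rj}<0$ and points with $x_j>\ell_j$.} If $x_j=\ell_j$, then $\bar x=x\in\X$ trivially. So assume $x_j>\ell_j$. For a row $r$ with $a_{rj}\ge 0$ we have $A_{r\cdot}\bar x = A_{r\cdot}x - a_{rj}(x_j-\ell_j)\le A_{r\cdot}x\le b_r$, so such rows can never be violated. For a row $r$ with $a_{rj}<0$ we have
\begin{equation*}
A_{r\cdot}\bar x=\sum_{s\in\N\setminus\{j\}}a_{rs}x_s + a_{rj}\ell_j .
\end{equation*}
Consequently, $x_j$ is lower bound reachable if and only if, for every $r$ with $a_{rj}<0$ and every $x\in\X$ with $x_j>\ell_j$, the inequality $\sum_{s\ne j}a_{rs}x_s + a_{rj}\ell_j\le b_r$ holds.

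\emph{Both directions.} For the "if" direction, take any $x\in\X$ with $x_j>\ell_j$. It is feasible for \eqref{opt1}, so $\sum_{s\ne j}a_{rs}x_s\le o_j^{-,r}$. Adding $a_{rj}\ell_j$ and using the hypothesis gives the required inequality. (If the feasible set of \eqref{opt1} is empty, then $o_j^{-,r}=-\infty$ and the claim is vacuous.) For the "only if" direction, assume reachability. Then every feasible point of \eqref{opt1} satisfies $\sum_{s\ne j}a_{rs}x_s\le b_r-a_{rj}\ell_j$. Taking the supremum over these points gives $o_j^{-,r}\le b_r-a_{rj}\ell_j$.

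\emph{Main obstacle.} The step needing care is interpreting "$\max$" in \eqref{opt1} as a supremum, which is exactly what the "only if" direction uses. The maximum may fail to be attained because the constraint $x_j>\ell_j$ is strict and continuous variables are present. Reading it as a supremum, the bound $\sup\le b_r-a_{rj}\ell_j$ follows directly from the pointwise bound, so no attainment argument is needed. Similarly, the "if" direction only uses that every feasible value is at most the supremum. The conventions for infinite bounds should also be stated explicitly. For $\ell_j=-\infty$, reachability requires that any smaller value $a$ keep the solution feasible, and the term $a_{rj}a\to+\infty$ shows this is impossible as soon as a row with $a_{rj}<0$ applies to a feasible point. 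This matches the right-hand condition being $+\infty\le b_r$, i.e.\ false unless the feasible set of \eqref{opt1} is empty.
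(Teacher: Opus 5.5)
Your proof is correct and follows the same route as the paper's. Lowering $x_j$ to $\ell_j$ cannot violate rows with $a_{rj}\ge 0$, and for rows with $a_{rj}<0$ the condition $o_j^{-,r}+a_{rj}\ell_j\le b_r$ is exactly the requirement over all feasible points with $x_j>\ell_j$. The only difference is in the ``only if'' direction: the paper argues by contradiction from a point $x^*$ that attains $o_j^{-,r_0}$, while your supremum argument needs no attainment. Attainment can genuinely fail, for instance when $x_j$ is continuous (because of the strict inequality in \eqref{opt1}) or when \eqref{opt1} is unbounded. You also state explicitly the integrality of $\ell_j$ for $j\in\I$, which the paper uses without saying so.
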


\begin{proof}
	We only prove statement (i) as the proof of statement (ii) is similar.
	Suppose that $o_j^{-,r} + a_{rj} \ell_j \le b_r$ holds for all $r \in \M$ with $a_{rj} <0$. 
    Let  $x^*\in \X$ be a feasible solution with $x^*_j > \ell_j$ and  $\bar{x}=(x^*_1, \ldots,\ell_j, \ldots, x^*_n) $.
    For $r \in \M$ with 
    $a_{rj} \geq 0$, it follows that ${\sum_{s\in \N}} a_{rs} \bar{x}_s = \sum_{{s \in \N \backslash \{j\}}} a_{rs} x^*_s + a_{rj} \ell_j \le {\sum_{s\in \N}} a_{rs} x^*_s \le b_r$, where the last inequality follows from ${x^*} \in \X$; 
    for $r \in \M$ with $a_{rj} < 0$, it follows that 
    $$\sum_{s \in \N} a_{rs} \bar{x}_s = \sum_{{s \in \N \backslash \{j\}}} a_{rs} x^*_s + a_{rj} \ell_j \stackrel{(a)}{\leq} \max_{x_j > \ell_j} \left\{\sum\limits_{{s \in \N \backslash \{j\}}} a_{rs} x_s ~\mid~ x \in \X \right\} + a_{rj}\ell_j =o_{j}^{-,r}+a_{rj}\ell_j \le b_r,$$
    where (a) follows from $x^* \in \X$ and $x_j^* > \ell_j$.
    Therefore, $\bar{x} \in \X$ and $x_j$ is lower bound reachable.
    
    Now, suppose that $x_j$ is lower bound reachable. 
    If, otherwise, $o_{j}^{-,r_0}+a_{r_0j}\ell_j> b_{r_0}$  holds for some $r_0 \in \M$ with {$a_{r_0j}< 0$}, then by the definition of $o_{j}^{-,r_0}$ in \eqref{opt1}, there exists a point $x^*\in \X$ such that $o_{j}^{-,r_0}=\sum_{{s \in \N \backslash \{j\}}} a_{r_0s} x^*_s$, and thus $\sum_{{s \in \N \backslash \{j\}}} a_{r_0s} x^*_s+ a_{r_0j}  \ell_j > b_{r_0}$.
    This implies $\bar{x}:=(x_1^*, \ldots, \ell_j,\dots, x_n^*) \notin \X$, a contradiction with the fact that $x_j$ is lower bound reachable.
\end{proof}

{\cref{fixing-theorem} shows that determining whether $\{x_j\}$ are lower or upper bound
reachable can be done by solving the optimization problems \eqref{opt1} and \eqref{opt2}.
The following example further illustrates this.}

{\begin{example}[\cref{ConditionExample} continued]\label{ex2}
    Note that  constraint $0 \le 8x_1 + 4x_2 - x_3 \le 8$ in problem \eqref{condition-mip}  can be transformed into two constraints in the ``$\le$'' form:
    \begin{equation}\label{tmeq}
    \begin{aligned}
            & -8x_1 - 4x_2 + x_3 \le 0, \\
        & 8x_1 + 4x_2 - x_3 \le 8.
    \end{aligned}
    \end{equation}
    Let us use \cref{fixing-theorem} to show that $x_3$ is lower bound reachable.
    As only the second constraint in \eqref{tmeq} involves variable $x_3$ with a negative coefficient, we only need to consider the optimization problem 
    \begin{equation}
        \label{compute-o}
        o_{3}^{-,2} = \max_{x_3 > 0} \left\{ 8x_1 + 4x_2 ~\mid~ 0 \le 8x_1 + 4x_2 - x_3 \le 8,~x_1, x_2, x_3 \in \left\{0, 1\right\} \right\}.
    \end{equation}
    By simple computation, we obtain $o_{3}^{-,2} = 8$. 
    From $o_{3}^{-,2} + (-1)\ell_3 = 8 \leq 8 $, $x_3$ must be lower bound reachable.
\end{example}}

Unfortunately, although the above approach for checking whether $\{x_j\}$ are lower or upper bound reachable is computationally tractable,  solving the \MIP problems \eqref{opt1} and \eqref{opt2} could still be computationally demanding. 
In the next subsection, we will use the probing framework to find {upper bounds} for $o_j^{-,r}$ and $o_j^{+,r}$, thereby enabling a computationally efficient algorithm for detecting dual fixings.

\begin{remark}\label{remark1}
	{It is worth noting that  if  $x_j > \ell_j$ and  $x_j < u_j$ are removed from \eqref{opt1} and \eqref{opt2}, \cref{fixing-theorem} still holds.
	Indeed, let 
	\begin{align}
		& \bar{o}_j^{-,r} := \max \left\{\sum\limits_{s \in \N \backslash \{j\}} a_{rs} x_s ~\mid~ x \in \X \right\}, ~\forall~j \in \N,~r \in \M ~\text{with}~a_{rj} <0,\label{opt1a}\\
		& \bar{o}_j^{+,r} := \max \left\{\sum\limits_{s \in \N \backslash \{j\}} a_{rs} x_s ~\mid~ x \in \X \right\},~\forall~j \in \N,~r \in \M ~\text{with}~a_{rj} >0,\label{opt2a}
	\end{align}
	be obtained by removing $x_j > \ell_j$ and  $x_j < u_j$  from \eqref{opt1} and \eqref{opt2}, respectively.
	For any $j \in \N$ and $r \in \M$ with $a_{rj}<0$, as $x\in \X$, it must follow
	$\max_{x_j = \ell_j} \left\{\sum\limits_{s \in \N \backslash \{j\}} a_{rs} x_s ~\mid~ x \in \X \right\} + a_{rj} \ell_j \le b_r$, and thus $\bar{o}_j^{-,r} + a_{rj} \ell_j  \le b_r$ holds if and only if $o_j^{-,r} + a_{rj} \ell_j  \le b_r$ holds.
	Similarly, for any $j \in \N$ and $r \in \M$ with $a_{rj}>0$, $\bar{o}_j^{+,r}+ a_{rj} u_j  \le b_r$ holds if and only if $o_j^{+,r}+ a_{rj} u_j  \le b_r$ holds.
	As a result, \cref{fixing-theorem} still holds when problems \eqref{opt1} and \eqref{opt2} are replaced by the standard \MIP problems \eqref{opt1a} and \eqref{opt2a} (without strict inequalities).}
	 
	{However, in approximation approaches such as the one described in the next subsection, 
	the strict inequalities $x_j>\ell_j$ and $x_j < u_j$ 
	can tighten the feasible regions of problems \eqref{opt1} and \eqref{opt2}, which
	usually results in tighter upper bounds for $o_j^{-,r}$ and $o_j^{+,r}$ (than those for $\bar{o}_j^{-,r}$ and $\bar{o}_j^{+,r}$), thereby triggering more generalized dual fixings.}
\end{remark}


\subsection{Deriving {upper bounds} for $o_j^{-,r}$ and $o_j^{+,r}$ by probing}

To fix a variable $x_j$ by \cref{fixing-theorem} efficiently, here we use the probing framework to compute {upper bounds} for $o_j^{-,r}$ and $o_j^{+,r}$.
Specifically, letting $\ell^0$ and $u^0$ (respectively, $\ell^1$ and $u^1$) be the lower and upper bounds of variables $x$, obtained by 
probing on $x_k = 0$ (respectively, $x_k = 1$); 
see \cref{alg1}.
In other words, $\ell^0 \leq x \leq u^0$ holds for all $x \in \X$ with $x_k=0$, and $\ell^1 \leq x \leq u^1$ holds for all $x \in \X$ with $x_k=1$.
Then it follows that
\begin{equation}\label{estimate}
	\small
	\begin{aligned}
    o_j^{-,r}  = & \max_{x_j > \ell_j} \left\{\sum\limits_{{s \in \N \backslash \{j\}}} a_{rs} x_s ~\mid~ x \in \X \right\} \\
    \leq & \max\left\{\sum\limits_{{s \in \N \backslash \{j\}}, \,a_{rs} > 0} a_{rs} u^0_s + \sum\limits_{{s \in \N \backslash \{j\}},  \,a_{rs} < 0} a_{rs} \ell_s^0 ,
		\sum\limits_{{s \in \N \backslash \{j\}},  \,a_{rs} > 0} a_{rs} u^1_s + \sum\limits_{{s \in \N \backslash \{j\}},  \, a_{rs} < 0} a_{rs} \ell_s^1\right\}.
		\end{aligned}
\end{equation}
Note that {if $k = j$ (i.e., the probing variable is $x_j$)}, then {the strict inequality $x_j > \ell_j$ in problem \eqref{opt1} enforces a tighter {upper bound} for $o_j^{-,r}$}:
\begin{equation}\label{estimate2}
	\begin{aligned}
		o_j^{-,r}  = & \max_{x_j > \ell_j} \left\{\sum\limits_{{s \in \N \backslash \{j\}}} a_{rs} x_s ~\mid~ x \in \X \right\}
		\leq\sum\limits_{{s \in \N \backslash \{j\}}, \, a_{rs} > 0} a_{rs} u^1_s + \sum\limits_{{s \in \N \backslash \{j\}},  \,a_{rs} < 0} a_{rs} \ell_s^1.
	\end{aligned}
\end{equation}

{Given a fixed probing variable $x_k$,} a direct implementation of computing the {upper bounds} of $o_j^{-,r}$ (i.e., the right-hand side of \eqref{estimate} or \eqref{estimate2}) for all {$j \in \N$ and $r \in \M$ with $a_{rj}<0$} takes $\CO(|\N| \times \texttt{nnz})$ operations.
{However, this computational complexity can be reduced to $\CO(\texttt {nnz})$, as detailed in the following.
First, we compute}
\begin{equation}
	\label{compute-alpha}
	\alpha_r^0 = \sum\limits_{s \in \N, \, a_{rs} > 0} a_{rs} u^0_s + \sum\limits_{s \in \N,  \,a_{rs} < 0} a_{rs} \ell_s^0~\text{and}~\alpha_r^1 = \sum\limits_{s \in \N,  \,a_{rs} > 0} a_{rs} u^1_s + \sum\limits_{s \in \N,  \,a_{rs} < 0} a_{rs} \ell_s^1
\end{equation}
{for all $r \in \M$ with the complexity of $\CO(\texttt{nnz})$. 
Then, using} 
\begin{align}
	\label{estimateFast}
	\sum\limits_{{s \in \N \backslash \{j\}},  \,a_{rs} > 0} a_{rs} u^0_s + \sum\limits_{{s \in \N \backslash \{j\}}, \, a_{rs} < 0} a_{rs} \ell_s^0 =\alpha_r^0 - a_{rj} \ell_j^0, \\
	\sum\limits_{{s \in \N \backslash \{j\}},  \,a_{rs} > 0} a_{rs} u^1_s + \sum\limits_{{s \in \N \backslash \{j\}}, \, a_{rs} < 0} a_{rs} \ell_s^1=\alpha_r^1 - a_{rj} \ell_j^1,
\end{align}
{the right-hand side of \eqref{estimate} or \eqref{estimate2} reduces to $\max\{\alpha_r^0- a_{rj} \ell_j^0,~ \alpha_r^1- a_{rj} \ell_j^1\}$ or $\alpha_r^1- a_{rj} \ell_j^1$, respectively, and can be computed in constant time.
As a result, the upper bounds of $o_j^{-,r}$ for all {$j \in \N$ and $r \in \M$ with $a_{rj}<0$}  can be computed in $\CO(\texttt {nnz})$ time.}

{Similar {upper bounds} can be derived for $o_j^{+,r}$. Note that when probing on multiple binary variables $\{x_k\}_{k \in \mathcal{S}}$, we may obtain $|\mathcal{S}|$ upper bounds for $o_{j}^{-,r}$ and $o_{j}^{+,r}$ with the complexity of $\CO(|\mathcal{S}| \times \texttt{nnz})$. These bounds may be different, and thus}, we can choose the tightest ones, as to trigger more dual fixings.

{
\begin{example}[\cref{ConditionExample} continued]
	Let us use probing to derive an upper bound for $o_{3}^{-,2}$ in problem \eqref{compute-o}.
    Suppose that variable $x_1$ is probed on. 
    Fixing $x_1 = 1$ and applying domain propagation (on the constraints in problem \eqref{condition-mip}) yields $x_2 = 0$ and thus
   	$\ell^1 = \left(1,0,0\right)$ and $ u^1 = \left(1,0,1\right)$. 
   	Similarly, fixing $x_1=0$ and applying domain propagation, we obtain  $\ell^0 = \left(0,0,0\right)$ and $u^0 = \left(0,1,1\right)$. 
   	As a result, we can compute an upper bound for $o_{3}^{-,2}$:
    $$\max\left\{\alpha_2^0 - a_{23} \ell_3^0,~ \alpha_2^1 - a_{23} \ell_3^1\right\} = \max\{ 4- 0, 8 - 0\}=  8.$$
    From \cref{ex2}, this upper bound is tight and not greater than the right-hand side of the second constraint in \eqref{tmeq}. 
    Therefore, we can conclude that $x_3$ is lower bound reachable.
\end{example}}

\section{Numerical experiments}
\label{section-num}

In this section, we evaluate the performance impact of the two proposed methods in combining and enhancing probing and dual fixing in \cref{section-dfprobing,section-generalize}.
We implement the proposed methods in the open-source \MIP solver HiGHS \rev{$1.14.0$} \citep{Huangfu2018}.
Note that \rev{standard probing, dual fixing, and dual substitution \cite{Achterberg2019} (see also the last paragraph in \cref{section-dualfixing}) have all been implemented in HiGHS 1.14.0}, and will be included in all our computational experiments.
The experiments are conducted on a Linux cluster of \rev{Intel(R) Xeon(R) Platinum 8358 CPUs running at 2.60 GHz.}
We run \HiGHS using the serial version, with a time limit of 7200 seconds.


We test our algorithms on the MIPLIB 2017 benchmark testset \citep{Gleixner2021}. 
We use 5 random seeds for each of the $240$ problems, and each problem and seed combination is treated as an individual observation, referred to as an ``instance''. 
Note that the default setting of HiGHS reported incorrect objective values for \rev{one} instance, i.e., \texttt{map10} with seed 1, and after embedding dual fixing into the probing framework, HiGHS \rev{additionally} reported incorrect objective values for \rev{two instances: \texttt{map10} with seed 4 and \texttt{blp-ar98} with seed 1}\footnote{\rev{For these two instances, the objective values reported by HiGHS with dual fixing augmented probing are $-494.662795463$ and $6205.835712$, while the true objective values are $-495$ and $6205.2147104$.
In an attempt to rule out the possibility of introducing bugs when implementing the dual fixing augmented probing, we have outputted the presolved instances, and solved them by the default setting of CPLEX 20.1.0.0 with $5$ seeds. 
The results show that in all cases, CPLEX can return the \emph{correct} objective values: $-495$ and $6205.2147104$.}}.
In our experiments, we exclude these three instances, yielding a testbed of $1197$ instances used in Tables \ref{default-dual-1}--\ref{default-full-1}.



We first evaluate the performance {impact} of integrating dual fixing into the probing framework.
To do this, we compare the default setting of \HiGHS, denoted as \Default, with the setting \DFProbing where dual fixing is embedded into the probing framework;
see \cref{alg1}.
The results are summarized in \cref{default-dual-1}.
In \cref{default-dual-1}, the ``$\ge n$'' bracket contains instances that are solved by at least one setting under comparison and for which the slower
of the two used $\ge n$ seconds. 
{With the increasing of $n$, the brackets can exclude instances that are ``easy'' for both settings, forming a hierarchy of instance subsets with increasing difficulty \citep{Achterberg2013b}.}
Column \texttt{\#Ins} denotes the number of instances solved by at least one setting ({in total, \rev{763} instances can be solved by at least one setting, and the remaining \rev{434} instances cannot be solved to optimality by both settings}), and column \texttt{\#S} denotes the number of instances solved by the corresponding setting in each bracket. 
Columns \texttt{T} and \texttt{N} {under \Default or \DFProbing} are the shifted geometric {means} of {CPU} time {in seconds} and number of nodes, {with a shift of 1.
Under ``Compare'', we report the ratios of the shifted geometric means of the CPU times and number of nodes (a value less than $1.0$ denotes that \DFProbing achieves a better performance than \Default).}
We observe from \cref{default-dual-1} that embedding dual fixing into the probing framework enables \DFProbing to achieve a \rev{slightly} better performance.
Overall, \rev{the number of nodes returned by \DFProbing is {3.1\%} smaller than those returned by \Default, with a slight reduction on CPU time}.
However, for harder instances that are solved by at least $1000$ seconds, we can observe a CPU time reduction of \rev{$2.0\%$}, {showing that the performance is more pronounced on relatively difficult instances.}
For the \rev{$246$} affected instances where the solving path differs among two settings\footnote{{Here, we follow \cite{Achterberg2013b} to assume that the solving path is identical if both the number of nodes and the number of simplex iterations are identical for the two settings.}}, \rev{we can observe a CPU time reduction of $1.6\%$ and a node reduction of $9.7\%$, respectively.}

\begin{table}[t]
    \renewcommand{\arraystretch}{1.4}
	\addtolength{\tabcolsep}{-2pt}
    \centering
    \scriptsize
    \caption{Performance impact of embedding dual fixing into the probing framework.}
    \rev{\begin{tabular}{|l|r|r|r|r|r|r|r|r|r|r|r|r|} \hline
		&              &              \multicolumn{3}{c|}{\Default}  &           \multicolumn{3}{c|}{\DFProbing}  &\multicolumn{2}{c|}{Compare} &           \multicolumn{3}{c|}{Affected}      \\ \hline
Bracket   &      \texttt{\#Ins}    &       \texttt{\#S}    &         \texttt{T}    &         \texttt{N}    &       \texttt{\#S}    &         \texttt{T}    &         \texttt{N}    &         \texttt{T}    &         \texttt{N}    &    \texttt{\#Ins}    &         \texttt{T}    &          \texttt{N}     \\ \hline
$\ge$       0     &     763      &      751     &    240.64    &    817.81    &      760     &    239.92    &    792.58    &       0.997  &       0.969  &     246      &        0.984  &        0.903    \\ \hline
$\ge$      10     &     687      &      675     &    374.12    &   1437.22    &      684     &    372.61    &   1389.42    &       0.996  &       0.967  &     241      &        0.984  &        0.904    \\ \hline
$\ge$     100     &     506      &      494     &    862.88    &   4232.91    &      503     &    857.03    &   4081.48    &       0.993  &       0.964  &     200      &        0.980  &        0.908    \\ \hline
$\ge$    1000     &     251      &      239     &   2405.06    &  14188.79    &      248     &   2356.93    &  13752.49    &       0.980  &       0.969  &     118      &        0.953  &        0.931    \\ \hline
    \end{tabular}}
	\label{default-dual-1}
\end{table}


To better understand where the improvements come from, {we further compare  the total number of bound changes (where for a variable $x_k$, the number of bound changes is defined by $\sum_{v=0}^1 (|\{ j \in \N\backslash \{k\} \, : \,\ell_j^v\neq \ell_j \}|+|\{ j \in \N\backslash \{k\} \, : \,u_j^v\neq u_j \}|)$) derived by the first round of probing (i.e., the presolve round where probing is first called \citep{Gleixner2023}) with and without dual fixing embedded.
The total number of bound changes reflects the number of reductions in R1--R4 detected by probing, i.e., the larger the total number of bound changes, the more the reductions in R1--R4.
The results are summarized in 
\cref{default-dual-first-round}. 
Column \Impl reports the shifted geometric mean of the total number of bound changes (with a shift of $100$) detected in the first round of probing.
When counting the bound changes, we do not count the disregarded ones reasoning from dual fixing of variable $x_j$ with $c_j=0$; see \cref{section-embed}.
Column \texttt{\Tprobing}  reports the shifted geometric mean of the CPU time in seconds (with a shift of $1$) spent in the first round of probing.
Note that each bracket ``$\ge n$'' in \cref{default-dual-first-round} contains the same instances as in \cref{default-dual-1}.}
From \cref{default-dual-first-round}, we observe that the computational effort spent in implementing dual fixing into the probing framework is fairly small; overall, it only leads to an increase of \rev{$7.9\%$} probing time. 
In contrast, dual fixing augmented probing can detect \rev{$20.2\%$} more bound changes than those detected by the vanilla probing, which enables probing to derive more reductions in R1--R4, and thus further renders the branch-and-cut algorithm to explore a smaller search tree and achieve a better overall performance; see \cref{default-dual-1}.

\begin{table}[t]
    \renewcommand{\arraystretch}{1.4}
	\addtolength{\tabcolsep}{-2pt}
	\centering
	\scriptsize
	\caption{Comparison of {the bound changes detected in the first round of probing with and without dual fixing embedded.}}
    \rev{\begin{tabular}{|l|r|r|r|r|r|r|r|r|r|r|r|r|r|r|r|r|} \hline
		&              \multicolumn{2}{c|}{\Default}  &           \multicolumn{2}{c|}{\DFProbing}  &               \multicolumn{2}{c|}{Compare} &           \multicolumn{2}{c|}{Affected}      \\ \hline
Bracket        &   \quad \Impl     & \Tprobing   &    \Impl    & \Tprobing  &   \Impl  & \Tprobing  &   \Impl  & \Tprobing   \\ \hline
$\ge$        0     &   2344.72    &      0.44    &  2817.77    &      0.48    &      1.202  &       1.079  &             1.717  &        1.119   \\ \hline
$\ge$       10     &   2520.88    &      0.47    &  3089.82    &      0.51    &      1.226  &       1.082  &             1.736  &        1.120   \\ \hline
$\ge$      100     &   2297.20    &      0.47    &  2900.96    &      0.50    &      1.263  &       1.082  &             1.777  &        1.115   \\ \hline
$\ge$     1000     &   2001.84    &      0.50    &  2814.88    &      0.55    &      1.406  &       1.091  &             2.029  &        1.153   \\ \hline
    \end{tabular}}
	\label{default-dual-first-round}
\end{table}

\begin{table}[t]
    \renewcommand{\arraystretch}{1.4}
    \addtolength{\tabcolsep}{-2pt}
    \centering
    \scriptsize
      \caption{Performance impact of using probing to detect  (generalized) dual fixings {(with \Default as the baseline)}.}
    \label{default-onlypartial-1}
    \rev{\begin{tabular}{|l|r|r|r|r|r|r|r|r|r|r|r|r|} \hline
                      &          &\multicolumn{3}{c|}{\Default }              &     \multicolumn{3}{c|}{\GDF }       &  \multicolumn{2}{c|}{Compare}  &  \multicolumn{3}{c|}{Affected}   \\ \hline
        Bracket       &  \texttt{\#Ins}    &  \texttt{\#S}        & \texttt{T}              & \texttt{N}             &  \texttt{\#S}        & \texttt{T}              & \texttt{N}              &  \texttt{T}        & \texttt{N}         &  \texttt{\#Ins}  & \texttt{T}        & \texttt{N}      \\ \hline
        $\ge$       0     &     755      &      751     &    232.11    &    806.13    &      750     &    232.97    &    809.15    &         1.004  &       1.004   &      58      &       1.019  &        1.059    \\ \hline
        $\ge$      10     &     679      &      675     &    361.29    &   1416.78    &      674     &    362.55    &   1422.70    &         1.003  &       1.004   &      58      &       1.019  &        1.059    \\ \hline
        $\ge$     100     &     491      &      487     &    860.51    &   4584.30    &      486     &    863.47    &   4610.92    &         1.003  &       1.006   &      58      &       1.019  &        1.059    \\ \hline
        $\ge$    1000     &     236      &      232     &   2421.77    &  16024.50    &      231     &   2418.25    &  15940.83    &         0.999  &       0.995   &      36      &       0.987  &        0.957    \\ \hline
        \end{tabular}}
\end{table}

Next, we evaluate the performance impact of using the probing framework to detect (generalized) dual fixings; see \cref{section-generalize}.  
The comparison results of the default setting of \HiGHS and the one with the generalized dual fixing (denoted as \GDF) are summarized in \cref{default-onlypartial-1}. 
\rev{Overall, using \Default as a baseline, the impact of the proposed generalized dual fixing  is neutral, affecting only 58 instances.}
We also performed an additional experiment by applying the generalized dual fixing on top of \DFProbing (denoted as \All), and compared it with \DFProbing (where dual fixing augmented probing was implemented).
\rev{From the results in \cref{dual-all}, using \DFProbing as a baseline, the impact of the proposed generalized dual fixing  is still neutral.  
However, for the 43 affected instances, a CPU time reduction of 2.7\% and a node reduction of 1.5\% can be observed.}


\begin{table}[t]
	\renewcommand{\arraystretch}{1.4}
	\addtolength{\tabcolsep}{-2pt}
	\centering
	\scriptsize
	\caption{{Performance impact of using probing to detect  (generalized) dual fixings (with \DFProbing as the baseline)}.}
	\rev{\begin{tabular}{|l|r|r|r|r|r|r|r|r|r|r|r|r|} \hline
			&         & \multicolumn{3}{c|}{\DFProbing }    &                    \multicolumn{3}{c|}{\All }    &                          \multicolumn{2}{c|}{Compare}  &  \multicolumn{3}{c|}{Affected}   \\ \hline
			Bracket          &  \texttt{\#Ins}  &  \texttt{\#S}         & \texttt{T}            & \texttt{N}            &  \texttt{\#S}       & \texttt{T}           & \texttt{N}            & \texttt{T}             & \texttt{N}             &  \texttt{\#Ins}     & \texttt{T}       & \texttt{N}        \\ \hline
			$\ge$       0     &     760      &      760     &    236.71    &    800.00    &      757     &    236.35    &    799.36    &       0.998  &       0.999  &      43      &       0.973  &        0.985   \\ \hline
            $\ge$      10     &     684      &      684     &    367.79    &   1393.51    &      681     &    367.34    &   1392.26    &       0.999  &       0.999  &      41      &       0.971  &        0.984   \\ \hline
            $\ge$     100     &     498      &      498     &    871.14    &   4340.17    &      495     &    869.28    &   4334.83    &       0.998  &       0.999  &      41      &       0.971  &        0.984   \\ \hline
            $\ge$    1000     &     244      &      244     &   2436.93    &  14413.51    &      241     &   2421.81    &  14306.47    &       0.994  &       0.993  &      27      &       0.964  &        0.928   \\ \hline
	\end{tabular}}
	\label{dual-all}
\end{table}

\begin{table}[t]
	\renewcommand{\arraystretch}{1.4}
	\addtolength{\tabcolsep}{-2pt}
	\centering
	\scriptsize
	\caption{Performance impact of embedding dual fixing into the probing framework and using probing to detect generalized dual fixings.}
	\label{default-full-1}
	\rev{\begin{tabular}{|l|r|r|r|r|r|r|r|r|r|r|r|r|} \hline
		            &          &    \multicolumn{3}{c|}{\Default }              &  \multicolumn{3}{c|}{\All }                 &   \multicolumn{2}{c|}{Compare} &  \multicolumn{3}{c|}{Affected}      \\ \hline
		Bracket     &  \texttt{\#Ins}  &  \texttt{\#S}     & \texttt{T}              & \texttt{N}               &  \texttt{\#S}           & \texttt{T}            & \texttt{N}           & \texttt{T}        &    \texttt{N}     &  \texttt{\#Ins}  &   \texttt{T}       & \texttt{N}       \\ \hline
		$\ge$       0     &     763      &      751     &    240.64    &    796.51    &      757     &    239.55    &    770.64    &         0.995  &       0.968  &     271      &       0.983  &        0.907    \\ \hline
        $\ge$      10     &     687      &      675     &    374.12    &   1399.16    &      681     &    372.15    &   1350.07    &         0.995  &       0.965  &     266      &       0.984  &        0.908    \\ \hline
        $\ge$     100     &     506      &      494     &    862.88    &   4107.22    &      500     &    855.18    &   3950.00    &         0.991  &       0.962  &     225      &       0.978  &        0.912    \\ \hline
        $\ge$    1000     &     253      &      241     &   2380.09    &  14172.72    &      247     &   2323.93    &  13644.77    &         0.976  &       0.963  &     136      &       0.957  &        0.927    \\ \hline
	\end{tabular}}
\end{table}

Now, we present the computational results in \cref{default-full-1} to demonstrate the effectiveness of the two proposed techniques in combining and enhancing probing and dual fixing; see Sections 2 and 3.
As observed in \cref{default-full-1}, the \HiGHS with the two proposed techniques can achieve a better performance than \Default.
\rev{For instances that are solved by at least $1000$ seconds, we can observe a CPU time reduction of $2.4\%$ and  a node reduction of $3.7\%$;
for the 271 affected instances, we can observe a CPU time reduction of 1.7\% and a node reduction of 9.3\%.}
Note that compared with applying the individual technique, applying both the two techniques in combining probing and dual fixing enables HiGHS to find reductions on more problem instances \rev{(271)}, and makes a better overall performance.

\section{Conclusions {and future work}}
\label{section-conclusion}
In this paper, we have investigated the probing and dual fixing techniques, and developed two methods to combine and enhance these two powerful techniques for MIP problems. 
The first method embeds dual fixing into the probing framework to detect more variable implications for probing.
The second method generalizes the classic dual fixing technique where more variable fixings can be detected, and uses the probing framework for a quick verification.
Through computational experiments on the MIPLIB 2017 benchmark instances, we demonstrate \rev{the potential of the two proposed techniques on \HiGHS}.


Regarding the proposed generalized dual fixing, we have used the probing framework to compute upper bounds  for $o_j^{-,r}$ and $o_j^{+,r}$  in \eqref{opt1} and \eqref{opt2} and  detect variable fixings by verifying the conditions in \cref{fixing-theorem}. 
It would be interesting to explore other alternatives to derive upper bounds for $o_j^{-,r}$ and $o_j^{+,r}$ towards a more effective implementation of the generalized dual fixing.

\begin{acknowledgement}
    The authors would like to thank the editors and two anonymous reviewers for their insightful comments that significantly improved the quality of our work.
\end{acknowledgement}




\bibliographystyle{spmpsci}      
\bibliography{shorttitles, library-svjour3}

\end{document}